\newcommand{\ol}[1]{{\overline{#1}}}
\newtheorem{theorem}{Theorem}[section]
\newtheorem*{maintheorem}{Main Theorem}
\newtheorem{lemma}[theorem]{Lemma}
\newtheorem{corollary}[theorem]{Corollary}
\begin{document}
\title[Anisimov's Theorem for inverse semigroups]{Anisimov's Theorem for inverse semigroups}

\subjclass[2010]{20M18}

\maketitle

\begin{center}

 MARK KAMBITES\footnote{Email \texttt{Mark.Kambites@manchester.ac.uk}.}

    \medskip

    School of Mathematics, \ University of Manchester, \\
    Manchester M13 9PL, \ England.

 \date{\today}
\keywords{}
\thanks{}

\end{center}

\begin{abstract}
The idempotent problem of a finitely generated inverse semigroup is
the formal language of all words over the generators representing idempotent
elements. This note proves that a finitely generated inverse semigroup
with regular idempotent problem is necessarily finite. This answers
a question of Gilbert and Noonan Heale, and establishes a generalisation
to inverse semigroups of Anisimov's Theorem for groups.
\end{abstract}

\section{Introduction}

The \textit{word problem} of a finitely generated group is the set of
all words over some given generating set which represent the identity
element in the group. Anisimov's Theorem \cite{Anisimov72} is the statement that a
finitely generated group is finite if and only if it has word problem
which is a regular language. Although in itself not difficult
(indeed, modulo some foundational results in formal language theory, the proof is
almost trivial), the theorem is of considerable significance. It laid the foundations
for the use of formal language theory in combinatorial group theory,
presaging a host of later (and much harder) correspondence theorems
describing the relationship between
structural properties of a finitely generated group, and language-theoretic
properties of its word problem. Probably the most famous and 
celebrated of these is the \textit{Muller-Schupp Theorem} \cite{Muller83}, which says that
a finitely generated group has context-free word problem if and only it
has a free subgroup of finite index; other examples can be found in
\cite{Brough11,K_counter,Holt08,Holt05,Holt08b,KambitesWordProblemsRecognisable}.

The reason the word problem is capable of capturing important structural features of a group, is that
it encapsulates in language-theoretic form the computational problem of deciding whether two words
in the generators represent the same element (that is, the word problem in the sense of universal
algebra). The word problem in this latter sense has also been studied for finitely
generated semigroups (both in full generality, and restricted to ``group-like'' classes
such as \textit{cancellative} and \textit{inverse} semigroups); indeed, it has been a major
focus of research for over 60 years, with many beautiful results and a number of important questions
remaining open (see just for example \cite{Adyan87,Birget98,Post47}). However,
the lack of identity and inverses in a semigroup means that this problem is not encapsulated in
the language of words representing the identity.
Gilbert and Noonan Heale \cite{Gilbert11} recently introduced a natural language-theoretic
generalisation of the group word
problem to finitely generated inverse monoids, by studying the language of those
words over a given finite generating set which represent \textit{idempotent}
elements of the monoid; they term this language the \textit{idempotent
problem}. (Here, we shall generalise the setting very slightly further to
consider also inverse semigroups without an identity element, but the difference
is not of great importance.)

Given this definition, the question immediately arises of whether there is an
analogue of Anisimov's Theorem, that is, does an inverse semigroup have
regular
idempotent problem if and only if it is finite? One implication ---
that a finite inverse semigroup has regular idempotent problem --- is,
as in the group case, almost immediate \cite[Proposition~3]{Gilbert11}.
However, the converse --- that every
inverse semigroup with regular idempotent problem is finite --- is not
so obvious. Gilbert and Noonan Heale established that this implication holds
for certain special classes of inverse semigroups (those which are \textit{$E$-unitary}
\cite[Theorem~6]{Gilbert11}, \textit{strongly $E$-reflexive} \cite[Theorem~7]{Gilbert11}
or have finitely many idempotents \cite[Theorem~10]{Gilbert11}), but posed as an open problem
the question of whether the implication holds in general \cite[Question 4]{Gilbert11}. The purpose of this note is
to answer this question in the affirmative, thus establishing a complete
analogue of Anisimov's Theorem for inverse semigroups:

\begin{maintheorem}[Anisimov's Theorem for Inverse Semigroups]
A finitely generated inverse semigroup or monoid has regular idempotent problem
if and only if it is finite.
\end{maintheorem}

Conceptually, the proof proceeds by reducing the problem to a purely structural
question about inverse semigroups: namely, whether a finitely generated, infinite inverse
semigroup can have a finite, idempotent-pure quotient. This question in
turn is answered using an embedding theorem of Billhardt \cite{Billhardt92}
which allows us to embed an inverse semigroup into a \textit{$\lambda$-semidirect
product} of an idempotent-pure quotient acting on a semilattice, together with a
new (although not difficult) result showing that the property of local
finiteness is preserved under taking $\lambda$-semidirect products with
finite inverse monoids. This conceptual order is reversed in the structure of
the paper which, in addition to this introduction, comprises two sections.
Section~\ref{sec_structure} establishes the required structural results concerning
inverse semigroups and idempotent-pure congruences, while Section~\ref{sec_main}
builds upon these to establish our main results.

\section{Inverse Semigroups, Congruences and $\lambda$-semidirect Products}\label{sec_structure}

We begin by recalling some elementary facts about inverse semigroups and congruences; for a fuller
introduction see for example \cite{Lawson98}.

A semigroup $S$ is called \textit{inverse} if for every element $x$ there
is a unique element $x^{-1}$ such that $x x^{-1} x = x$ and $x^{-1} x x^{-1} = x^{-1}$. The map
$x \mapsto x^{-1}$ is an involution on any inverse semigroup $S$. Inverse
semigroups have a rich and beautiful structure theory, and arise naturally in numerous
areas of both pure and applied mathematics, most notably as models of \textit{partial
symmetry} \cite{Lawson98}. Recall that an element $e$ of a semigroup is called \textit{idempotent}
if $e^2 = e$. The idempotents in an inverse semigroup $S$ are exactly the elements of the form $xx^{-1}$
for $x \in S$. Typically inverse semigroups have many idempotents; inverse semigroups with one idempotent
are exactly groups. An inverse semigroup in which \textit{every} element is
idempotent is called a \textit{semilattice} (such structures being exactly meet semilattices,
in the sense of order theory, under the meet operation).

A \textit{congruence} $\equiv$ on a semigroup $S$ is an equivalence relation $\equiv$ such that $a \equiv b$ implies
$xa \equiv xb$ and $ax \equiv bx$ for all $x \in S$; we write $[s]$ for the equivalence class of an element
$s \in S$. If $\equiv$ is a congruence then the set $S / \hspace{-3pt} \equiv$ of equivalence classes has an
associative multiplication well-defined by $[s][t] = [st]$; the map $S \to S / \hspace{-3pt} \equiv, s \mapsto [s]$ is a morphism. Conversely,
if $\rho : S \to T$ is a morphism then the relation $\cong$ given by $a \cong b \iff \rho(a) = \rho(b)$ is a
congruence on $S$, called the \textit{kernel} of $\rho$, and the image $\rho(S)$ is isomorphic to
$S / \hspace{-3pt} \cong$. A congruence is called \textit{idempotent-pure} if idempotent elements are not related
to non-idempotent elements, that is, if $e \equiv a$ and $e^2 = e$ together imply $a^2 = a$. A
morphism of semigroups is called \textit{idempotent-pure} if its kernel is idempotent-pure.

Now let $G$ be an inverse semigroup acting by endomorphisms on the left of
another inverse semigroup $A$. For clarity, we denote the multiplication in
$G$ by juxtaposition, the multiplication in $A$ by $\circ$ and the action
by $\cdot$. Consider the set
$$\lbrace (\alpha, g) \in A \times G \mid g g^{-1} \cdot \alpha = \alpha \rbrace$$
equipped with a multiplication given by 
$$(\alpha, g) (\beta, h) = \left( \left[ (gh)(gh)^{-1} \cdot \alpha \right] \circ \left[ g \cdot \beta \right], \ gh \right).$$
It can be shown \cite[Proposition~1]{Billhardt92} that the given set is closed
under this operation, and indeed forms an inverse semigroup. This semigroup, which was
introduced and first studied by Billhardt \cite{Billhardt92}, is called the
\textit{$\lambda$-semidirect product} of $G$ acting on $A$.

We remark that the $\lambda$-semidirect product is somewhat unusual, indeed arguably
even unnatural, in the context of inverse semigroup 
theory. Inverse semigroups usually arise as models of ``partial 
symmetry'', and by the Wagner-Preston Theorem \cite[Theorem 1.5.1]{Lawson98}
can all be represented as such. It is thus customary (and almost always most 
natural) to considering them acting by partial bijections, rather than by 
functions, and an action by endomorphisms thus seems intuitively like a
``category error''. However, the following theorem of
Billhardt \cite[Corollary~7]{Billhardt92} and the numerous uses to which it has been
put show that it is nevertheless extremely useful.

\begin{theorem}[Billhardt 1992]\label{thm_billhardt}
Let $I$ be an inverse semigroup and $\equiv$ be an idempotent-pure congruence
on $I$. Then $I$ embeds in a $\lambda$-semidirect product of the quotient
$I / \hspace{-3pt} \equiv$ acting on a semilattice.
\end{theorem}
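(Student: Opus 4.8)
The plan is to construct the embedding explicitly, in the spirit of the Kaloujnine--Krasner embedding theorem for group extensions. Write $\sigma \colon I \to G$ for the quotient morphism onto $G = I/{\equiv}$. The first observation is that idempotent-purity is equivalent to the statement $\sigma^{-1}(E(G)) = E(I)$, where $E(\cdot)$ denotes the semilattice of idempotents: every idempotent of $G$ has the form $\sigma(e)$ with $e \in E(I)$ (since an idempotent $\sigma(a)$ equals $\sigma(aa^{-1})$), so if $\sigma(a)$ is idempotent then $a$ is $\equiv$-related to an idempotent and hence, by purity, idempotent itself. Thus the ``kernel'' of $\sigma$ is governed by the semilattice $E(I)$, and I expect the embedding to take the form $s \mapsto (\alpha_s, \sigma(s))$, with $\alpha_s$ recording idempotent data and $\sigma(s)$ the image in $G$.

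The immediate obstacle is to identify a semilattice on which $G$ acts by endomorphisms. The naive choice $E(I)$ with conjugation action $g \cdot e = ses^{-1}$, for a preimage $s$ of $g$, fails: $\sigma(s)$ does not determine $ses^{-1}$, since two preimages of the same $g$ can conjugate an idempotent to genuinely different idempotents. I would resolve this exactly as in the group case, by passing to the wreath-type semilattice $A = E(I)^{G}$ of all functions $G \to E(I)$ under pointwise meet, on which $G$ acts by the translation $(g \cdot \alpha)(h) = \alpha(g^{-1}h)$. This action is by endomorphisms and, crucially, is well-defined purely by reindexing, sidestepping the well-definedness problem above. Fixing a set-theoretic section $\tau \colon G \to I$ of $\sigma$, I would then define $\alpha_s$ by recording at each coordinate $h$ the deviation $\tau(h)^{-1} s\, \tau(\sigma(s)^{-1}h)$ of $s$ from the section; since the $\sigma$-image of this element is idempotent, idempotent-purity guarantees that the element itself lies in $E(I)$, so that $\alpha_s$ is a genuine point of $A$, after adjusting by domain idempotents so that the defining condition $\sigma(s)\sigma(s)^{-1} \cdot \alpha_s = \alpha_s$ of the $\lambda$-semidirect product holds.

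It then remains to verify that $\phi \colon s \mapsto (\alpha_s, \sigma(s))$ is an injective homomorphism into the $\lambda$-semidirect product. Injectivity is where purity does its most visible work: if $\phi(s) = \phi(t)$ then in particular $\sigma(s) = \sigma(t)$, so $s \equiv t$ and hence $s^{-1}t$ has idempotent image in $G$ and is therefore itself idempotent. A short computation in the inverse semigroup, using in addition the equality of the first coordinates $\alpha_s = \alpha_t$, then forces $s = t$.

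I expect the main technical obstacle to be the homomorphism property $\phi(s)\phi(t) = \phi(st)$. Here the twist $(gh)(gh)^{-1} \cdot \alpha_s$ appearing in the first coordinate of the $\lambda$-semidirect product multiplication must be seen to be exactly what is needed for the two telescoping deviation functions to compose into the deviation function of $st$; this is precisely why the $\lambda$-semidirect product, rather than an ordinary semidirect product, is required, since the idempotent $(gh)(gh)^{-1}$ records the domain on which the product is defined. Carefully tracking these domain idempotents through the computation, and choosing the section $\tau$ so that the bookkeeping closes up, is where I anticipate essentially all of the effort to lie.
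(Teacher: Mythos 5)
First, a point of reference: the paper does not prove this statement at all --- it is imported verbatim from Billhardt \cite[Corollary~7]{Billhardt92} and used as a black box, so there is no internal proof to compare yours against. That said, your plan is in the right spirit: Billhardt's own argument is indeed a Kaloujnine--Krasner-style wreath-product embedding built from a set-theoretic section of the quotient map, and your opening observation that idempotent-purity amounts to $\sigma^{-1}(E(G)) = E(I)$ --- which is exactly what places each deviation $\tau(h)^{-1}s\,\tau(\sigma(s)^{-1}h)$ inside the semilattice of idempotents --- is correct and is the right use of the hypothesis.

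However, as written this is a plan rather than a proof, and each of the three substantive verifications is left open at precisely the point where it is delicate. (i) Membership: with the translation action on all of $E(I)^{G}$ and an arbitrary section $\tau$, the defining condition $\sigma(s)\sigma(s)^{-1}\cdot\alpha_s = \alpha_s$ unwinds (writing $g = \sigma(s)$ and using $(gg^{-1})^{-1}=gg^{-1}$ and $g^{-1}gg^{-1}=g^{-1}$) to $\tau(gg^{-1}h)^{-1}s\,\tau(g^{-1}h) = \tau(h)^{-1}s\,\tau(g^{-1}h)$ for all $h$, which fails for a general section; the repair (cutting down to partial functions whose domains are governed by $gg^{-1}$, i.e.\ changing the semilattice itself) is the essential design choice of Billhardt's construction, not an ``adjustment by domain idempotents'' that can be deferred. (ii) The homomorphism property: the telescoping that closes the group-case computation here leaves behind the idempotent $\tau(\sigma(s)^{-1}h)\,\tau(\sigma(s)^{-1}h)^{-1}$, which is not an identity, so the product of the images is a priori only a restriction, in the natural partial order, of the image of the product; showing that the $\lambda$-twist $(gh)(gh)^{-1}\cdot\alpha$ effects exactly the matching restriction is the heart of the theorem and is entirely absent. (iii) Injectivity: $\sigma(s)=\sigma(t)$ plus purity gives only that $s$ and $t$ are compatible ($s^{-1}t,\, st^{-1} \in E(I)$); to conclude $s=t$ you must additionally recover $s^{-1}s = t^{-1}t$ from $\alpha_s = \alpha_t$, and since left-multiplication by $\tau(h)^{-1}$ is not cancellative in an inverse semigroup this is not a formality --- it depends on the unspecified adjustments in (i). None of these obstacles is fatal (all are overcome in \cite{Billhardt92}), but each is a genuine gap in the proposal as it stands.
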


We shall need the following straightforward but useful result about
$\lambda$-semidirect products. Recall that a semigroup is called \textit{locally
finite} if all of its finitely generated subsemigroups are finite.

\begin{theorem}\label{thm_localfinite}
Any $\lambda$-semidirect product of a finite inverse semigroup $G$ acting on a locally finite inverse
semigroup $A$ is locally finite. Moreover, if $\sigma : \mathbb{N} \to \mathbb{N}$ is non-decreasing such that
every
$k$-generated subsemigroup of $A$ has cardinality bounded above by $\sigma(k)$, then every $m$-generated
subsemigroup of the $\lambda$-semidirect product has cardinality bounded above by $|G| \sigma(m |G|)$.
\end{theorem}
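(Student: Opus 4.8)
The plan is to take an arbitrary $m$-generated subsemigroup $T$ of the $\lambda$-semidirect product, say generated by elements $(\alpha_1, g_1), \dots, (\alpha_m, g_m)$, and to bound $|T|$ directly. Since the second coordinate of a product is simply the corresponding product in $G$, projection onto the second coordinate is a morphism from $T$ onto the subsemigroup $H = \langle g_1, \dots, g_m \rangle \subseteq G$; hence the second coordinates of elements of $T$ contribute at most $|H| \le |G|$ possibilities, and all the real work lies in controlling the first coordinates.

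The key idea is to identify a single finite subset of $A$ that captures every first coordinate that can occur. I would set $X = \{ g \cdot \alpha_i \mid g \in G, \ 1 \le i \le m \} \subseteq A$, so that $|X| \le m|G|$. Because the action composes, $g \cdot (g' \cdot \alpha_i) = (gg') \cdot \alpha_i$, we have $g \cdot X \subseteq X$ for every $g \in G$; and because each $g$ acts by an \emph{endomorphism} of $A$, this closure propagates to the generated subsemigroup, giving $g \cdot \langle X \rangle \subseteq \langle X \rangle$ for all $g \in G$. I would also record that each generator's first coordinate already lies in $X$, since $\alpha_i = (g_i g_i^{-1}) \cdot \alpha_i$ by the defining condition on the $\lambda$-semidirect product.

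With these closure properties in hand, I would prove by induction on word length that every element of $T$ has the form $(\gamma, g)$ with $\gamma \in \langle X \rangle$ and $g \in H$. The base case is the observation above. For the inductive step, multiply an element $(\gamma, g)$ (with $\gamma \in \langle X \rangle$, $g \in H$) on the right by a generator $(\alpha_i, g_i)$; the resulting first coordinate $\bigl[(gg_i)(gg_i)^{-1} \cdot \gamma\bigr] \circ \bigl[g \cdot \alpha_i\bigr]$ is the $\circ$-product of two elements of $\langle X \rangle$ --- the first because $\langle X \rangle$ is invariant under the action of the idempotent $(gg_i)(gg_i)^{-1}$, the second because $g \cdot \alpha_i \in X$ --- and so lies in $\langle X \rangle$, while the second coordinate $gg_i$ lies in $H$.

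Combining the two coordinate bounds then gives $|T| \le |H| \cdot |\langle X \rangle| \le |G| \cdot |\langle X \rangle|$. Since $\langle X \rangle$ is generated by at most $m|G|$ elements and $\sigma$ is non-decreasing, the hypothesis yields $|\langle X \rangle| \le \sigma(|X|) \le \sigma(m|G|)$, whence $|T| \le |G|\,\sigma(m|G|)$; in particular $T$ is finite, establishing local finiteness and the stated bound simultaneously. I expect the only point demanding genuine care to be the use of the action-by-endomorphisms hypothesis to push $g \cdot (-)$ through the $\circ$-operation --- this is precisely the step that reduces the action on arbitrary first coordinates to the action on the finitely many $\alpha_i$ --- together with keeping the counting sharp enough to land on exactly $m|G|$ rather than a cruder bound such as $m(|G|+1)$.
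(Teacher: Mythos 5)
Your proposal is correct and follows essentially the same route as the paper: your set $X$ is exactly the paper's $U$ (the union of $G$-orbits of the generators' first coordinates), your $\langle X\rangle$ is its $W$, and the key steps --- $G$-invariance of $\langle X\rangle$ via the endomorphism property, containment of the generated subsemigroup in $\langle X\rangle \times G$, and the count $|G|\,\sigma(m|G|)$ --- all match. The only differences are cosmetic: you run the containment as an explicit induction on word length, track the subsemigroup $H \leq G$ rather than all of $G$ in the second coordinate, and make explicit the observation $\alpha_i = (g_i g_i^{-1})\cdot\alpha_i$ that the paper leaves implicit.
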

\begin{proof}
Let $I$ be the $\lambda$-semidirect product, and let $Y$ be a
finite subset of $I$. Our first aim is to show that the subsemigroup
generated by $Y$ is finite.

Define
$$Y' = \lbrace \alpha \mid (\alpha, g) \in Y \textrm{ for some } g \in G \rbrace$$
and
$$U = \lbrace h \cdot \alpha \mid \alpha \in Y', h \in G \rbrace.$$
That is, $U$ is the union of the orbits under the $G$-action of the
$A$-components of pairs in $Y$. Let $W$ be the subsemigroup of $A$
generated by $U$. Since $Y$ is finite, $Y'$ is finite. Since $G$ is
finite, it follows that $U$ is finite. And since $A$ is locally finite,
we conclude that $W$ is finite.

We claim first that $W$ is invariant under the $G$-action on $A$. Indeed, suppose
$\gamma \in W$ and $g \in G$. Then $\gamma = \alpha_1 \circ \dots \circ \alpha_n$ for some
$n \in \mathbb{N}$ and
$\alpha_1, \dots, \alpha_n \in U$. By the definition of $U$, each $\alpha_i$ can be
written as $g_i \cdot \beta_i$ where $g_i \in G$ and $\beta_i \in Y'$. Now since $G$
acts by endomorphisms we have
$$g \cdot \gamma \ = \ g \cdot (\alpha_1 \circ \dots \circ \alpha_n) \ = \ g \cdot (g_1 \cdot \beta_1 \circ \dots \circ g_n \cdot \beta_n) \ = \ (g \cdot g_1 \cdot \beta_1) \circ \dots \circ (g \cdot g_n \cdot \beta_n).$$
But by the definition of $U$, each $g \cdot g_i \cdot \beta_i = (g g_i) \cdot \beta_i \in U$, so by
the definition of $W$, we conclude that $g \cdot \gamma \in W$, as required to prove
the claim.

Now we shall show that the subsemigroup $\langle Y \rangle$ of $I$
generated by $Y$ is contained in $W \times G$; since $W$ and $G$ are
both finite, this will show that $\langle Y \rangle$ is finite.
That $Y$ itself is contained in $W \times G$ follows immediately from
the definitions, so it suffices to show that the
product of two elements in $\langle Y \rangle \cap (W \times G)$ must lie
in $W \times G$.

Suppose, then, that $(\alpha, g)$ and $(\beta, h)$ are in 
$\langle Y \rangle \cap (W \times G)$. Then by the definition of the
$\lambda$-semidirect product, their product in $I$ is
$$\left( \left[ (gh)(gh^{-1}) \cdot \alpha \right] \circ \left[ g \cdot \beta \right],  \ gh \right).$$
Clearly the right-hand component is in $G$, and because $\alpha, \beta \in W$ and
$W$ is closed under both the multiplication in $A$ and the $G$-action, the left-hand
component lies in $W$.

Finally, suppose $\sigma : \mathbb{N} \to \mathbb{N}$ is non-decreasing such
that every $k$-generated subsemigroup of $A$ has cardinality bounded above
by $\sigma(k)$. Then by
the definitions we have $|U| \leq |Y'| |G| \leq |Y| |G|$. But $W$ is a subsemigroup of
$A$ generated by $|U|$ elements, so by the properties of $\sigma$ we have
$|W| \leq \sigma(|U|) \leq \sigma(|Y||G|)$. Now since $\langle Y \rangle$ embeds in $W \times G$, we have
$$|\langle Y \rangle| \ \leq \ |W \times G| \ = \ |G| |W| \ \leq \ |G| \sigma(|Y||G|)$$
as required to complete the proof.
\end{proof}

\begin{corollary}\label{cor_finite}
A $k$-generated inverse semigroup with a finite idempotent-pure quotient of cardinality $n$ has
cardinality at most $n (2^{kn} -1)$.
\end{corollary}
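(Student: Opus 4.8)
The plan is to combine the two preceding theorems, with the only genuinely new ingredient being an explicit local-finiteness bound for semilattices. Write $G = I / \hspace{-3pt} \equiv$ for the given finite idempotent-pure quotient, so that $|G| = n$. Since the congruence $\equiv$ is idempotent-pure, Theorem~\ref{thm_billhardt} allows us to embed $I$ into a $\lambda$-semidirect product $P$ of $G$ acting on some semilattice $A$. Because this embedding is injective, it suffices to bound the cardinality of the image of $I$ inside $P$.

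The key observation is that semilattices are locally finite, with an explicit and very small bound. Indeed, a semilattice is a commutative inverse semigroup in which every element is idempotent, so any product $a_{i_1} \circ \dots \circ a_{i_m}$ of generators depends only on the \emph{set} of generators appearing in it. Hence a semilattice generated by $k$ elements consists exactly of the meets of the non-empty subsets of its generating set, and so has cardinality at most $2^k - 1$. In the language of Theorem~\ref{thm_localfinite} we may therefore take $\sigma(k) = 2^k - 1$ as a non-decreasing bounding function for $A$.

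It then remains only to feed these quantities into the quantitative part of Theorem~\ref{thm_localfinite}. The semigroup $I$ is $k$-generated, so its image in $P$ is a $k$-generated subsemigroup; applying the theorem with $m = k$, with the finite inverse semigroup $G$ of order $n$, and with the bounding function $\sigma(k) = 2^k - 1$, we obtain
$$|I| \ \leq \ |G|\, \sigma(k |G|) \ = \ n \left( 2^{kn} - 1 \right),$$
as required. There is no serious obstacle here: all the substantive work has already been done in establishing Theorems~\ref{thm_billhardt} and~\ref{thm_localfinite}, and the only points requiring care are to check that the semilattice bound $2^k - 1$ is correct and to track the substitutions $m = k$ and $|G| = n$ so that the numbers match the claimed expression exactly.
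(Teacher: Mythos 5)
Your proposal is correct and follows essentially the same route as the paper: embed $I$ via Billhardt's Theorem into a $\lambda$-semidirect product over a semilattice, note that a $k$-generated semilattice has at most $2^k-1$ elements (the size of the free semilattice of rank $k$), and apply the quantitative part of Theorem~\ref{thm_localfinite} with $\sigma(k)=2^k-1$ and $m=k$. Your explicit justification of the semilattice bound via idempotency and commutativity is a nice touch that the paper leaves as ``easily seen.''
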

\begin{proof}
Let $I$ be a $k$-generated inverse semigroup and $J$ a
finite idempotent-pure quotient of cardinality $n$. Then by Theorem~\ref{thm_billhardt}
(Billhardt's Theorem), $I$ embeds in the $\lambda$-semidirect product
of $J$ acting on a semilattice $A$. The size of any $k$-generated semilattice, and hence
of any $k$-generated subsemigroup of $A$, 
is bounded by the size of the free semilattice of rank $k$, which is easily seen to be
$2^k - 1$. Thus, defining $\sigma : \mathbb{N} \to \mathbb{N}$ by
$\sigma(k) = 2^k - 1$, Theorem~\ref{thm_localfinite} tells us that every $k$-generated
subsemigroup of the $\lambda$-semidirect product, and in particular $I$ itself, has
cardinality at most $n (2^{kn} -1)$.
\end{proof}

We remark that the bound given by Corollary~\ref{cor_finite} is trivially
attained by a free semilattice of any rank $k$, since it has an idempotent-pure
morphism to the trivial group of
cardinality $n=1$. However, the bound seems unlikely to be sharp for $n > 1$, and it would be
interesting to see how far it can be improved.

\section{Languages and the Idempotent Problem}\label{sec_main}

In this section we shall recall the definition of the idempotent problem, and then establish our main
result. We begin by recapping some basic definitions from language theory; for a more detailed
introduction see for example \cite{Howie91}.

Let $\Sigma$ be a set (for now, not necessarily finite), called an \textit{alphabet}. We denote by $\Sigma^*$
the set of all
\textit{words} over $\Sigma$, that is, finite sequences of elements of $\Sigma$, including an \textit{empty word}
denoted $\epsilon$.
The set $\Sigma^*$ forms a monoid, with identity $\epsilon$, under the operation of concatenation (writing one word next to another),
called the \textit{free monoid} on $\Sigma$. The subset $\Sigma^* \setminus \lbrace \epsilon \rbrace$
forms a subsemigroup, called the \textit{free semigroup} on $\Sigma$, which we denote $\Sigma^+$.
A set of words over $\Sigma$, that is, a subset of $\Sigma^*$, is called a \textit{language} over $\Sigma$.  A language $L \subseteq \Sigma^*$
induces a congruence on $\Sigma$, given by $a \equiv_L b$ if and only if for all $x, y \in \Sigma^*$
we have $xay \in L \iff xby \in L$. This is called the \textit{syntactic congruence} of $L$; the
quotient $\Sigma^* / \hspace{-3pt} \equiv_L$ is the \textit{syntactic monoid} of $L$, denoted
$M(L)$, and the quotient map from $\Sigma^*$ to $M(L)$ is the \textit{syntactic
morphism}. We define the \textit{syntactic semigroup} of $L$, denoted $M^+(L)$, to be the subsemigroup
of $M(L)$ consisting of those equivalence classes containing a non-empty word, that is, the
image of $\Sigma^+$ under the syntactic morphism. Clearly either $M^+(L) = M(L)$ or $M^+(L)$ is
$M(L)$ with its identity element removed, depending on whether the empty word $\epsilon$ is
syntactically equivalent to any other word.

We shall call a language over a finite alphabet
\textit{regular} if its syntactic monoid (or, equivalently, its syntactic semigroup) is
finite. Regular languages are more usually defined
as those recognised by finite automata or generated by regular grammars; the definition we have given
is equivalent to these by a foundational result of language theory \cite[Theorem~3.1.4]{Howie91} and,
since it accords with the property which we shall use in our proof, saves defining irrelevant
notions. A language is called a \textit{group language} if its syntactic monoid is a group.

Now let $I$ be an inverse semigroup [respectively, monoid] generated by a (not necessarily finite)
subset $\Sigma$. The embedding of the generating set $\Sigma$ into $I$ extends to a canonical
surjective morphism from the free semigroup $\Sigma^+$ [respectively, the free monoid $\Sigma^*$]
onto $I$. The \textit{idempotent problem} for $I$ (with respect to the generating
set $\Sigma$), is the set of all words in $\Sigma^+$ [respectively, $\Sigma^*$] which map onto
idempotents of $I$. Note
that if $I$ is a group, the idempotent problem of $A$ is just the set of words representing the
identity in $I$, that is, the \textit{word problem} of $I$. The idempotent problem was introduced and
studied (in the case $\Sigma$ is finite and $I$ a monoid) by Gilbert and Noonan Heale \cite{Gilbert11}.

The following result gives a basic but useful structural property of the idempotent problem.

\begin{lemma}\label{lemma_syntacticmorphism}
Let $I$ be an inverse semigroup [respectively, monoid] with a (not necessarily finite) semigroup
[monoid] generating set $\Sigma$, and let $L$ be the idempotent problem of $I$ with respect to
$\Sigma$. Then there is a well-defined, idempotent-pure, surjective morphism from $I$
onto
$M^+(L)$, [respectively, $M(L)$] which takes each element of $I$ to the syntactic
equivalence class of any word over $\Sigma$ representing it. The kernel of
this map is the greatest idempotent-pure congruence on $I$.
\end{lemma}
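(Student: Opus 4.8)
The plan is to construct the required map as the unique morphism $\phi$ making the canonical surjection $\pi \colon \Sigma^+ \to I$ (respectively $\Sigma^* \to I$) and the syntactic morphism $\eta \colon \Sigma^+ \to M^+(L)$ (respectively $\Sigma^* \to M(L)$) fit into a commutative triangle, so that $\phi(\pi(w)) = \eta(w)$ for every word $w$. The single observation underpinning everything is that, by the definition of the idempotent problem, a word $w$ lies in $L$ exactly when $\pi(w)$ is idempotent; that is, $L = \pi^{-1}(E(I))$, where $E(I)$ denotes the set of idempotents of $I$. First I would check that $\phi$ is well defined, which amounts to showing that the kernel of $\pi$ is contained in the syntactic congruence $\equiv_L$. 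This is immediate: if $\pi(u) = \pi(v)$ then $\pi(xuy) = \pi(x)\pi(u)\pi(y) = \pi(x)\pi(v)\pi(y) = \pi(xvy)$ for all words $x,y$, so $xuy \in L \iff xvy \in L$ and hence $u \equiv_L v$. That $\phi$ is a surjective morphism is then formal, since $\pi$ and $\eta$ are morphisms and $\eta$ is surjective onto $M^+(L)$ by construction.

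For idempotent-purity I would exploit the fact that the syntactic congruence saturates $L$: taking $x = y = \epsilon$ in its definition shows that $u \equiv_L v$ together with $u \in L$ forces $v \in L$. So, supposing $\phi(s) = \phi(t)$ with $s$ idempotent, I would pick words $u,v$ representing $s$ and $t$, giving $u \equiv_L v$; since $s \in E(I)$ we have $u \in L$, whence $v \in L$ and therefore $t = \pi(v) \in E(I)$. This is precisely the statement that the kernel congruence of $\phi$ is idempotent-pure.

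It remains to show this kernel is the \emph{greatest} idempotent-pure congruence, and this is the only step I expect to require genuine care. I would take an arbitrary idempotent-pure congruence $\rho$ and prove $\rho \subseteq \ker\phi$. Given $s \rho t$, for every $p,q$ in $I$ one has $psq \rho ptq$ because $\rho$ is a congruence, and idempotent-purity of $\rho$ then yields $psq \in E(I) \iff ptq \in E(I)$; translating this back through $\pi$ by means of $L = \pi^{-1}(E(I))$ says exactly that $u \equiv_L v$ for representing words, i.e.\ $\phi(s) = \phi(t)$. The main obstacle here is purely bookkeeping rather than conceptual: the parameters $x,y$ in the definition of $\equiv_L$ range over $\Sigma^*$ and so include empty prefixes and suffixes, which correspond to allowing $p$ or $q$ to be an adjoined identity. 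I would handle this uniformly by arguing in $I$ with an identity adjoined (trivial to reconcile with the congruence), thereby covering both the semigroup and the monoid cases at once.
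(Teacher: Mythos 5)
Your proposal is correct and follows essentially the same route as the paper: well-definedness via $\ker\pi\subseteq{\equiv_L}$, idempotent-purity from the fact that $\equiv_L$ saturates $L$, and maximality by pushing an arbitrary idempotent-pure congruence through products $psq$ versus $ptq$ (the paper phrases this last step contrapositively, as a proof by contradiction, but the content is identical). Your explicit handling of empty prefixes and suffixes by adjoining an identity is a small point of extra care that the paper's proof glosses over.
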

\begin{proof}
Except where otherwise stated, we assume during this proof that words are drawn from $\Sigma^+$ in
the semigroup case and $\Sigma^*$ in the monoid case.

To show the claimed map is well-defined, we need to show that any
two words representing the same element of $I$ are syntactically
equivalent with respect to $L$. Indeed, suppose words $u$ and $v$
represent the same element of
$I$. Then clearly
for any words $x, y \in \Sigma^*$, we have that $xuy$ and $xvy$
represent the same element of $I$. In particular, $xuy$ represents an
idempotent if and only if $xvy$ represents an idempotent. That is,
$xuy \in L$ if and only if $xvy \in L$. Thus, $u \equiv_L v$.

The fact that the map is a morphism follows immediately from the fact that the canonical
morphism from $\Sigma^+$ [$\Sigma^*$] onto $I$ and the syntactic morphism from $\Sigma^+$
to $M^+(L)$ [respectively, $\Sigma^*$ to $M(L)$] are
both morphisms. The fact that the map is surjective is an immediate consequence of the fact
that the syntactic morphism maps $\Sigma^+$ [$\Sigma^*$] surjectively onto $M^+(L)$ [$M(L)$],
and every word represents some element of $I$.

To show the map is idempotent-pure, suppose $e$ and $a$ map to the same
place and $e$ is idempotent. Choose words $u$ and $v$ representing $e$
and $a$ respectively; then $u$ and $v$ are syntactically equivalent.
Since $e$ is idempotent, $u \in L$, so by syntactic equivalence $v \in L$,
and so $a$ is idempotent.

Finally, to show that the kernel is the greatest idempotent-pure congruence,
suppose for a contradiction that elements $a, b \in I$ are related by
some idempotent-pure congruence $\sigma$ but not by our kernel. Let $u$ and $v$ be words representing $a$ and
$b$ respectively. Since $a$ and $b$ are taken to different places by our
map, $u$ and $v$ are not syntactically equivalent, so there exist words
$x, y \in \Sigma^*$  such that one of $xuy$ and $xvy$ lies in $L$ and the other
does not. Let $\ol{x}$ and $\ol{y}$ be the elements of $I$ represented by $x$ and $y$
respectively This means that one of $\ol{x} a \ol{y}$ and $\ol{x} b \ol{y}$
is idempotent and the other is not. But $\sigma$ is a congruence so
$\ol{x} a \ol{y} \mathrel{\sigma} \ol{x} b \ol{y}$, which contradicts the
assumption that $\sigma$ is idempotent-pure.
\end{proof}

Lemma~\ref{lemma_syntacticmorphism} has two obvious, but nevertheless interesting, consequences.
The first is that
the isomorphism type of the syntactic semigroup or monoiod of the idempotent problem (which
encapsulates many of its language-theoretic properties) is
an invariant of the inverse semigroup or monoid which can be defined without reference
to a choice of generators (and even without the existence of a finite
choice of generators).
\begin{corollary}
The syntactic semigroup [monoid] of the idempotent problem of a (not necessarily finitely generated) inverse
semigroup [monoid] is invariant under change of semigroup [monoid] generating set.
\end{corollary}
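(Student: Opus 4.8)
The plan is to show that the syntactic semigroup (or monoid) of the idempotent problem does not depend on the choice of generating set, by exhibiting it as isomorphic to an object defined purely internally to $I$. The key observation is that Lemma~\ref{lemma_syntacticmorphism} already gives us such an object: for any generating set $\Sigma$, the syntactic semigroup $M^+(L_\Sigma)$ of the idempotent problem $L_\Sigma$ with respect to $\Sigma$ is the image of an idempotent-pure surjective morphism from $I$ whose kernel is the \emph{greatest} idempotent-pure congruence on $I$. The crucial point is that the greatest idempotent-pure congruence is an intrinsic property of $I$, involving no reference to generators whatsoever.

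First I would fix two generating sets $\Sigma_1$ and $\Sigma_2$ of $I$, with corresponding idempotent problems $L_1$ and $L_2$. Applying Lemma~\ref{lemma_syntacticmorphism} to each, I obtain surjective idempotent-pure morphisms $\phi_1 : I \to M^+(L_1)$ and $\phi_2 : I \to M^+(L_2)$ (writing $M$ for $M^+$ in the semigroup case and for $M$ in the monoid case throughout). By the final assertion of the lemma, the kernel of $\phi_1$ and the kernel of $\phi_2$ are both equal to the greatest idempotent-pure congruence on $I$; in particular they coincide. Since $M^+(L_1)$ and $M^+(L_2)$ are both isomorphic to $I$ modulo this common congruence, the standard correspondence between morphisms and their kernels (recalled in Section~\ref{sec_structure}) yields an isomorphism $M^+(L_1) \cong M^+(L_2)$. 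Explicitly, the map sending $\phi_1(s)$ to $\phi_2(s)$ for each $s \in I$ is well-defined precisely because the two kernels agree, and it is a bijective morphism by surjectivity of $\phi_1$ and $\phi_2$.

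I do not expect any serious obstacle here: the entire content has effectively been front-loaded into Lemma~\ref{lemma_syntacticmorphism}, and this corollary is a formal deduction from it. The only point requiring mild care is the bookkeeping between the semigroup and monoid cases, and the observation that ``greatest idempotent-pure congruence'' is manifestly generator-independent, so that the two syntactic structures are identified via a canonical isomorphism rather than merely an abstract one. The argument is thus a short uniqueness-of-quotient statement: two surjective morphisms from the same semigroup with the same kernel have isomorphic images.
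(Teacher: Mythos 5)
Your proposal is correct and matches the paper's (implicit) argument exactly: the corollary is deduced from the final assertion of Lemma~\ref{lemma_syntacticmorphism}, identifying the syntactic semigroup [monoid] with the quotient of $I$ by its greatest idempotent-pure congruence, which is defined without reference to generators. The paper leaves this deduction unstated, and your write-up simply makes the same reasoning explicit.
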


The second interesting consequence is that any property of inverse semigroups or monoids which is visible
in the quotient by the greatest idempotent-pure congruence is also visible in the idempotent problem. For example, 
recall that an inverse semigroup is called \textit{$E$-unitary} if the product of an idempotent and
a non-idempotent element is never idempotent. This is equivalent to the minimum group congruence 
being idempotent-pure \cite[Theorem 2.4.6]{Lawson98}, which in turn is easily seen to be equivalent to the
quotient by the greatest idempotent-pure morphism being a group. Combining with
Lemma~\ref{lemma_syntacticmorphism}, it follows
that that $E$-unitary property of inverse semigroups manifests itself as a natural property of the
idempotent problem:

\begin{corollary}
An inverse semigroup or monoid is $E$-unitary if and only if its idempotent problem is a group language.
\end{corollary}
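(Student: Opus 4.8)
The plan is to deduce the corollary directly from Lemma~\ref{lemma_syntacticmorphism} together with the reformulation of the $E$-unitary property recorded in the paragraph preceding the statement. The key observation is that Lemma~\ref{lemma_syntacticmorphism} identifies the syntactic semigroup $M^+(L)$ of the idempotent problem $L$ (in the monoid case, the syntactic monoid $M(L)$) with the quotient of $I$ by its greatest idempotent-pure congruence: the lemma supplies a surjective idempotent-pure morphism from $I$ onto $M^+(L)$ whose kernel is exactly that greatest congruence, so writing $\equiv$ for it we have $M^+(L) \cong I/{\equiv}$.

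First I would record the chain of equivalences governing the $E$-unitary property. By \cite[Theorem 2.4.6]{Lawson98} the semigroup $I$ is $E$-unitary if and only if its minimum group congruence $\sigma$ is idempotent-pure, and I would then verify the elementary equivalence (asserted in the text) that this in turn holds if and only if $I/{\equiv}$ is a group. For the forward direction, if $\sigma$ is idempotent-pure then $\sigma \subseteq {\equiv}$, whence $I/{\equiv}$ is a quotient of the group $I/\sigma$ and so is itself a group. For the reverse direction, if $I/{\equiv}$ is a group then $\equiv$ is an idempotent-pure group congruence; since $\sigma$ is the least group congruence we get $\sigma \subseteq {\equiv}$, and any congruence contained in an idempotent-pure congruence is again idempotent-pure, so $\sigma$ is idempotent-pure.

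Putting these together, $I$ is $E$-unitary if and only if $I/{\equiv}$ is a group, which by the identification above holds if and only if $M^+(L)$ (respectively $M(L)$) is a group, that is, if and only if $L$ is a group language. This completes the argument.

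The argument is short and presents no serious obstacle once Lemma~\ref{lemma_syntacticmorphism} is available; the only point demanding care is the choice between the syntactic semigroup and the syntactic monoid. Since Lemma~\ref{lemma_syntacticmorphism} yields $M^+(L)$ in the semigroup case and $M(L)$ in the monoid case, the phrase \emph{group language} must be interpreted through the corresponding object, and for a bare inverse semigroup it is $M^+(L)$, rather than $M(L)$, that is required to be a group. These two conditions are not interchangeable --- a non-trivial semilattice is $E$-unitary and has $M^+(L)$ the trivial group, yet its $M(L)$ is a two-element monoid with zero --- so this is the one place where the bookkeeping between the two cases must be respected.
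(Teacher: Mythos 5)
Your proposal is correct and follows essentially the same route as the paper, which proves the corollary in the paragraph preceding it: $E$-unitarity is equivalent (via the minimum group congruence being idempotent-pure) to the quotient by the greatest idempotent-pure congruence being a group, and Lemma~\ref{lemma_syntacticmorphism} identifies that quotient with $M^+(L)$ [respectively $M(L)$]; you have merely filled in the ``easily seen'' equivalence explicitly. Your closing remark about needing to read ``group language'' through $M^+(L)$ rather than $M(L)$ in the semigroup case is a genuine and worthwhile point of care --- the paper's stated definition of group language refers only to the syntactic monoid, and your semilattice example shows the two readings really do differ there --- so this is a refinement of, not a deviation from, the paper's argument.
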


%Next, we combine the previous lemma with the results of Section~\ref{sec_structure}, to describe the
%structure of an inverse semigroup in terms of the syntactic monoid of its idempotent problem.

%\begin{lemma}\label{lemma_embed}
%Let $I$ be an inverse semigroup generated by a (not necessarily finite)
%subset $A$, and $M(L)$ be the idempotent problem of $L$. Then the syntactic
%monoid $M(L)$ of $L$ is an inverse monoid, and $I$ embeds in the
%$\lambda$-semidirect product of a $M(L)$ acting on a semilattice. If
%$M(L)$ is finite then the semilattice may be chosen to be finite.
%\end{lemma}
%\begin{proof}
%By Lemma~\ref{lemma_syntacticmorphism}, $I$ admits an idempotent-pure
%map onto $M(L)$. Hence by Theorem~\ref{thm_billhardt} (Billhardt's Theorem),
%$I$ embeds in the $\lambda$-semidirect product of $M(L)$ acting on a
%semilattice. If $M(L)$ is finite then by Theorem~\ref{thm_localfinite}
%the semilattice can be chosen to be finite.
%\end{proof}

We now have all the ingredients for our main theorem.

\begin{maintheorem}[Anisimov's Theorem for Inverse Semigroups]
A finitely generated inverse semigroup or monoid has regular idempotent problem
if and only if it is finite.
\end{maintheorem}
\begin{proof}
Let $I$ be a finite generated inverse semigroup [monoid] and $L$ its idempotent problem with respect
to some finite set of generators. Suppose $L$ is regular. By Lemma~\ref{lemma_syntacticmorphism},
$I$ admits an idempotent-pure morphism to $M^+(L)$ [$M(L)$], which is finite because $L$ is regular.
Hence, by Corollary~\ref{cor_finite}, $I$ is finite. The converse, as mentioned
above, was established by
Gilbert and Noonan Heale \cite[Proposition~3]{Gilbert11}.
\end{proof}

The numerical bound from Corollary~\ref{cor_finite} says that a $k$-generated inverse semigroup
whose idempotent problem has $n$ syntactic equivalence classes cannot have more than $n (2^{kn} -1)$
elements. Again, this bound is attained for a free semilattice of rank $k$ (where $n=1$) but can
probably be improved substantially for larger $n$. If we fix a language, any inverse semigroup of
which it is the idempotent problem is $k$-generated where $k$ is number of letters appearing in the
language. It
follows that there are, up to isomorphism, only finitely many inverse semigroups with a given regular
language as their idempotent problem.

\section*{Acknowledgements}

The author thanks Tara Brough, Marianne Johnson and Markus Pfeiffer
for helpful conversations.

\bibliographystyle{plain}

\begin{thebibliography}{10}

\bibitem{Adyan87}
S.~I. Adyan and G.~U. Oganesyan.
\newblock On the word and divisibility problems for semigroups with one
  relation.
\newblock {\em Mat. Zametki}, 41:412--421, 458, 1987.

\bibitem{Anisimov72}
A.~V. An{\={\i}}s{\={\i}}mov.
\newblock Certain algorithmic questions for groups and context-free languages.
\newblock {\em Kibernetika}, 4(2):4--11, 1972.

\bibitem{Billhardt92}
B.~Billhardt.
\newblock On a wreath product embedding and idempotent pure congruences on
  inverse semigroups.
\newblock {\em Semigroup Forum}, 45(1):45--54, 1992.

\bibitem{Birget98}
J.-C. Birget.
\newblock Time-complexity of the word problem for semigroups and the {H}igman
  embedding theorem.
\newblock {\em Internat. J. Algebra Comput.}, 8(2):235--294, 1998.

\bibitem{Brough11}
T.~Brough.
\newblock Groups with poly-context-free word problem.
\newblock {\tt arXiv:1104.1806v1 [math.GR]}, 2011.

\bibitem{K_counter}
M.~Elder, M.~Kambites, and G.~Ostheimer.
\newblock On groups and counter automata.
\newblock {\em Internat. J. Algebra Comput.}, 18:1345--1364, 2008.

\bibitem{Gilbert11}
N.~D. Gilbert and R.~Noonan~Heale.
\newblock The idempotent problem for an inverse monoid.
\newblock {\em Internat. J. Algebra Comput.}, 21:1179--1194, 2011.

\bibitem{Holt08}
D.~F. Holt, M.~D. Owens, and R.~M. Thomas.
\newblock Groups and semigroups with a one-counter word problem.
\newblock {\em J. Aust. Math. Soc.}, 85(2):197--209, 2008.

\bibitem{Holt05}
D.~F. Holt, S.~Rees, C.~E. R{\"o}ver, and R.~M. Thomas.
\newblock Groups with context-free co-word problem.
\newblock {\em J. London Math. Soc. (2)}, 71(3):643--657, 2005.

\bibitem{Holt08b}
D.~F. Holt, S.~Rees, and M.~Shapiro.
\newblock Groups that do and do not have growing context-sensitive word
  problem.
\newblock {\em Internat. J. Algebra Comput.}, 18(7):1179--1191, 2008.

\bibitem{Howie91}
J.~M. Howie.
\newblock {\em Automata and Languages}.
\newblock Clarendon Press, 1991.

\bibitem{KambitesWordProblemsRecognisable}
M.~Kambites.
\newblock Word problems recognisable by deterministic blind monoid automata.
\newblock {\em Theoret. Comput. Sci.}, 362(1):232--237, 2006.

\bibitem{Lawson98}
M.~V. Lawson.
\newblock {\em Inverse Semigroups: The Theory of Partial Symmetries}.
\newblock World Scientific, 1998.

\bibitem{Muller83}
D.~E. Muller and P.~E. Schupp.
\newblock Groups, the theory of ends, and context-free languages.
\newblock {\em J. Comput. System Sci.}, 26(3):295--310, 1983.

\bibitem{Post47}
E.~L. Post.
\newblock Recursive unsolvability of a problem of {T}hue.
\newblock {\em J. Symbolic Logic}, 12:1--11, 1947.

\end{thebibliography}

\def\cprime{$'$} \def\cprime{$'$}

\end{document}